\documentclass[12pt]{amsart}

\usepackage[utf8]{inputenc}
\usepackage{hyperref}
\usepackage{lmodern}
\usepackage{comment}

\usepackage{amssymb,amsthm,amsmath,mathrsfs,amssymb}

\DeclareOption{proof}{%
  \setlength{\marginparwidth}{0.6in}%
  \def\?[#1]{\textbf{[#1]}\marginpar{\Large{\textbf{??}}}}%
}
\ProcessOptions\relax


\usepackage[a4paper,margin=3cm]{geometry}
\usepackage[mathscr]{euscript}

\makeatletter
\def\paragraph{\@startsection{paragraph}{4}%
  \z@\z@{-\fontdimen2\font}%
  {\normalfont\bfseries}}
\makeatother

\newcommand{\eps}{\varepsilon}

\newcommand{\Z}{\mathbb{Z}}
\newcommand{\N}{\Z_{\geq 0}}

\newcommand{\R}{\mathbb{R}}
\newcommand{\C}{\mathbb{C}}

\newcommand*\colvec[3][]{
    \begin{pmatrix}\ifx\relax#1\relax\else#1\\\fi#2\\#3\end{pmatrix}
}
\newcommand*\abs[1]{
    \left|#1\right|
}
\newcommand*\ad[1]{
    \mathrm{ad}\,#1
}
\newcommand*\pscal[2]{
	\langle#1,#2\rangle
}

\newcommand*\Leb{
	\mathrm{L}
}
\newcommand*\clsC{
	\mathscr{C}
}
\newcommand*\Sobolev{
	\mathrm{H}
}
\newcommand*\Poids{
	\mathscr{P}
}

\newcommand{\norm}[1]{\left\lVert#1\right\rVert}
\newtheorem{thm}{Theoreme}
\newtheorem{lem}{Lemma}

\newtheorem{prop}{Proposition}

\theoremstyle{definition}

\newtheorem{rem}{Remark}

\newtheorem{fait}{Claim}
\title[Absolutely continuous Furstenberg measures]{Absolutely continuous Furstenberg measures for finitely-supported random walks}
\author{Félix Lequen}
\address{Laboratoire AGM -- CY Cergy Paris Université}
\email{felix.lequen@cyu.fr}
\begin{document}

\maketitle

	\begin{abstract}In this note, we generalise a Bourgain's construction of finitely-supported symmetric measures whose Furstenberg measure has a smooth density from the case of $\mathrm{SL}_2(\R)$ to that of a general simple Lie group. The proof is the same as Bourgain's, except that the use of Fourier series is replaced by harmonic analysis on a maximal compact subgroup.\end{abstract}

\section{Introduction}
Let $\mu$ be a Borel probability measure on a non-compact connected simple Lie group $G$ with finite centre. Assuming that the support of $\mu$ generates a Zariski-dense semi-group, by a theorem of Furstenberg and Goldshei'd-Margulis, there exists a unique measure $\nu$ which is stationary for the random walk induced by $\mu$ on the full flag manifold $\mathscr{F}$ of $G$, that is $\mu * \nu = \nu$, where $\mu * \nu$ is the image of $\mu \otimes \nu$ by the action map $(g,\xi) \mapsto g\xi$. The measure $\nu$ is called the \emph{Furstenberg measure} and it controls many properties of the random walk associated to $\mu$. The study of the geometric properties of $\nu$ is therefore of interest (see Benoist-Quint \cite{benoist2018regularity} for more background).

In this note we focus on the case of finitely-supported $\mu$. Kaimanovich-Le Prince \cite{kaimanovich-leprince} had conjectured that in this case, the measure $\nu$ is never absolutely continuous with respect to the Lebesgue measure on $\mathscr{F}$ and in fact constructed examples with arbitrary small Hausdorff dimension of $\nu$:

\begin{thm}
	Let $d \geq 2$ and $\Gamma$ be a finitely-generated Zariski-dense subgroup of $\mathrm{SL}_d(\R)$. Then for every $\delta > 0$, there exists a finitely-supported measure $\mu$ on $\Gamma$ such that the Furstenberg measure associated to $\mu$ has Hausdorff dimension less than $\delta$.
\end{thm}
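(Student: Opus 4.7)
The plan is to combine an a priori upper bound on $\dim_H \nu$ in terms of the entropy and the Lyapunov spectrum of $\mu$ with a construction where the entropy stays bounded while the Lyapunov exponents diverge. Specifically, I would first establish (or invoke) the classical dimension inequality
\[
\dim_H \nu \leq \frac{H(\mu)}{\chi(\mu)},
\]
where $H(\mu) = -\sum_g \mu(g)\log \mu(g)$ is the Shannon entropy and $\chi(\mu) > 0$ is a positive Lyapunov quantity — the top exponent for the projective action, or the minimum gap $\chi_i - \chi_{i+1}$ between consecutive Lyapunov exponents for the full flag manifold. This kind of bound goes back to Ledrappier's covering arguments: by the Oseledets decomposition, typical $n$-step trajectories contract neighbourhoods of $\nu$-generic points at exponential rate $\chi(\mu)$, while the number of distinct $n$-step trajectories is controlled by $e^{nH(\mu)}$.

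Granted this bound, the task reduces to producing finitely supported measures whose Shannon entropy stays bounded while $\chi(\mu)$ grows without bound. Since $\Gamma$ is Zariski-dense in $\mathrm{SL}_d(\R)$, the Tits alternative (or direct genericity arguments in the Zariski topology) provides a loxodromic element $h \in \Gamma$ with largest eigenvalue $\lambda > 1$. Fixing a finite symmetric generating set $\gamma_1, \ldots, \gamma_k$ of $\Gamma$, I would consider the sequence of measures
\[
\mu_n := \frac{1}{k+1}\Bigl(\delta_{h^n} + \sum_{i=1}^{k} \delta_{\gamma_i}\Bigr).
\]
These are finitely supported on $\Gamma$, their supports generate $\Gamma$ itself and are therefore Zariski-dense, and $H(\mu_n) \leq \log(k+1)$ uniformly in $n$, so the Furstenberg measure $\nu_n$ is well-defined by Furstenberg--Goldsheid--Margulis.

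The crux is to show that $\chi(\mu_n) \gtrsim n\log\lambda$ as $n \to \infty$. Heuristically, the step $h^n$ is drawn with probability $1/(k+1)$ and each occurrence contributes $\sim n\log\lambda$ to the log-norm of the product, while the $\gamma_i$'s contribute only $O(1)$ per step, so $\chi_1(\mu_n) \geq c\,n\log\lambda/(k+1)$; an analogous argument gives a lower bound on the Lyapunov gaps relevant to the full flag manifold. A rigorous justification uses Furstenberg's integral formula $\chi_1(\mu_n) = \int \log(\|gv\|/\|v\|) \, d\mu_n(g)\, d\nu_n(v)$ together with a control of $\nu_n$ ensuring that $v$ is not systematically aligned with the stable directions of $h$; this is where Zariski-density of $\langle \gamma_i\rangle$ is essential. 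Combining the two ingredients gives $\dim_H \nu_n \lesssim \log(k+1)/(n\log\lambda) \to 0$, and choosing $n$ large enough produces the desired measure. The main technical obstacle is the matching lower bound on $\chi(\mu_n)$: the upper bound $\chi_1(\mu_n) \leq \int \log\|g\|\, d\mu_n(g)$ is immediate, but the lower bound requires a genuine non-cancellation argument exploiting both the regularity of $h$ and the Zariski-density of $\Gamma$.
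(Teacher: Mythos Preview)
The paper does not prove this theorem: it is stated in the introduction as a result of Kaimanovich--Le Prince, cited for context but not reproved. There is therefore no proof in the paper to compare your proposal against.

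For what it is worth, your outline is the standard strategy and is essentially the one of Kaimanovich--Le Prince: bound $\dim_H \nu$ above by an entropy-over-Lyapunov ratio and then drive the denominator to infinity while keeping the numerator bounded by inserting high powers of a fixed element. Two remarks on the execution. First, since the Furstenberg measure in this paper lives on the full flag manifold, you need \emph{all} gaps $\chi_i - \chi_{i+1}$ to grow, so $h$ must be $\R$-regular (eigenvalues of pairwise distinct moduli); such elements exist in any Zariski-dense subgroup, but this is stronger than ``loxodromic'' in some uses of the word. Second, the lower bound $\chi(\mu_n) \gtrsim n$ for your particular $\mu_n$ is, as you correctly flag, the genuine work: because $\nu_n$ varies with $n$, one has to rule out uniformly that $\nu_n$ concentrates near the repelling flag of $h$. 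This can be done, but a cleaner route --- and closer to what is usually presented --- is to take $\mu_n$ uniform on $\{a^n, b^n\}$ for a proximal ping-pong pair $a, b \in \Gamma$, where the linear growth of the Lyapunov exponents follows directly from the Schottky combinatorics without having to control the stationary measures along the way.
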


 However, Barany-Pollicott-Simon \cite{bps} showed this conjecture to be false using the "transversality method" (Pollicott-Simon \cite{pollicott-simon}), initially invented in the context of self-similar sets and which had given striking results for the study of Bernoulli convolutions (see Solomyak \cite{solomyak} and Peres-Solomyak \cite{peres-solomyak}), with which there is a number of analogies to the problem studied here.

The construction of Barany-Pollicott-Simon was not explicit, however, and did not give examples of symmetric measures. Another construction was given by Bourgain \cite{bourgain2012finitely} in the case of $\mathrm{SL}_2(\R)$, which gives rather explicit (symmetric) examples, and then Benoist-Quint \cite{benoist2018regularity} in the more general case of a connected semi-simple Lie group which is different from $\mathrm{SL}_2(\R)$. Actually, both proof give more than absolute continuity, as they show that it is possible to have a density of class $\mathscr{C}^r$ for arbitrary large $r$.

A fundamental part of Bourgain's proof (and in a different way of Benoist-Quint's proof) is the use of finitely-supported measure having a \emph{restricted spectral gap} for the associated Markov operator in $\mathrm{SL}_2(\R)$, which had been constructed earlier, first in the simpler compact case by the Bourgain-Gamburd method \cite{bourgain-gamburd-su2}\cite{bourgain-gamburd-sud}\cite{benoist-saxce}, and then in the non-compact case for $\mathrm{SL}_2(\R)$ by Bourgain-Yehudahoff \cite{bourgain-yehudayoff}. Since Bourgain's proof appeared, the restricted spectral gap property was established beyond the $\mathrm{SL}_2(\R)$ case by Boutonnet-Ioana-Salehi Golsefidy \cite{bisg}. More precisely, their proof gives the following (see the proof of corollary C in \cite{bisg}):

\begin{thm}
	Let $\Gamma < G$ be a \emph{topologically dense} subgroup. Assume that there exists a basis of $\mathfrak{g}$ with respect to which for every $g\in \Gamma$, the matrix of $\mathrm{Ad}\,g$ has coefficients which are algebraic numbers.
	
	Let $U$ be a neighbourhood of the identity in $G$. Then there exists a symmetric subset $T \subset U \cap \Gamma$ such that the Markov operator $T$ associated to the measure $\mu = \frac{1}{\abs{T}}\sum_{g \in T} \delta_g$ on $\Leb^2(\mathcal{F})$ defined by
		\[(Tu)(\xi) := \int u\left(g\xi\right)\mathrm{d}\mu(g)\] for every $u \in \Leb^2(\mathcal{F})$ and $\xi \in \mathscr{F}$ has a \emph{restricted sepctral gap}, that is, there exists a subspace $V$ of finite dimension in $\Leb^2(\mathscr{F})$such that for every $u \in V^{\perp}$, $\norm{Tu}_2 \leq \frac{1}{2}\norm{u}_2$.
		\label{thm:bisg}
\end{thm}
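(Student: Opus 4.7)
The plan is to invoke the main spectral gap construction of Boutonnet--Ioana--Salehi Golsefidy \cite{bisg}, specifically the proof of their Corollary C, and to identify the finite-dimensional exceptional subspace using Peter--Weyl for the maximal compact subgroup $K$, exploiting the fact that $\mathscr{F} = K/M$ is a compact $K$-homogeneous space.

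First I would use the topological density of $\Gamma$ to note that $U \cap \Gamma$ is dense in $U$ and in particular contains finite Zariski-dense symmetric subsets. Symmetrizing by adjoining inverses preserves both the algebraicity of the matrix coefficients of $\mathrm{Ad}\,g$ for $g \in \Gamma$ and the Zariski-density, so the hypotheses of \cite{bisg} can be arranged for some symmetric finite set $T \subset U \cap \Gamma$ obtained by selecting sufficiently many elements close to the identity.

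Second, I would apply the proof of their Corollary C to obtain, for a well-chosen $T$, a constant $0 < \rho < 1$ such that $\norm{T^n f}_2 \leq C\rho^n \norm{f}_2$ on a $T$-invariant, finite-codimensional subspace of $\Leb^2(\mathscr{F})$. The exceptional directions correspond to $K$-isotypic components with highest weight of bounded norm, equivalently with bounded Casimir eigenvalue; by Peter--Weyl each such component is finite-dimensional in $\Leb^2(K/M)$, so their sum yields a finite-dimensional subspace $V \subset \Leb^2(\mathscr{F})$.

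Third, to achieve the explicit contraction factor $\tfrac{1}{2}$ in a single step, I would replace the original set by one supporting the $n$-fold convolution $\mu^{*n}$ for $n$ large enough that $C\rho^n \leq \tfrac{1}{2}$; this new set lies in $U^n$, which can be arranged to lie in any prescribed neighbourhood of the identity by shrinking $U$ at the outset. The main obstacle is extracting from \cite{bisg} the precise identification of the non-contractive subspace with low-lying $K$-isotypic components: one has to unwind their spectral gap estimates to check that the exceptional eigenvalues of the Markov operator correspond to bounded Casimir eigenvalues, which is where the compactness of $\mathscr{F}$ enters decisively to ensure $V$ is genuinely finite-dimensional rather than merely finite-codimensional.
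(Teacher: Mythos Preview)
The paper gives no proof of this theorem: it is stated as an input from Boutonnet--Ioana--Salehi Golsefidy, with only the parenthetical ``(see the proof of corollary C in \cite{bisg})''. There is nothing in the paper to compare your sketch against --- you are unpacking a citation that the author treats as a black box.

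Two remarks on the sketch itself. Your third step, passing to $\mu^{*n}$ to force the contraction constant below $\tfrac12$, does not preserve the required form of the measure: $\mu^{*n}$ is in general not uniform on its support, so you no longer have a measure of the shape $\frac{1}{|T'|}\sum_{g\in T'}\delta_g$ for a symmetric set $T'$ (and replacing $\mu^{*n}$ by the uniform measure on the product set $T^n$ loses the spectral estimate). Second, the ``main obstacle'' you flag --- identifying $V$ with a sum of low $K$-isotypic components --- is not needed for the statement, which only asks for \emph{some} finite-dimensional $V$; the paper carries out that reduction to low $K$-types separately, at the beginning of the proof of Proposition~\ref{lem}, precisely because the cited theorem does not already provide it.
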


Let us now note that a functional-analytic argument of Benoist-Quint \cite{benoist2018regularity} already gives a density in $\Leb^2(\mathscr{F})$ from this last statement: indeed the restricted spectral gap of $T$ implies that the essential spectral radius of $T$ is strictly less than $1$ (in fact less than $1/2$), which implies that $1 = \dim \ker\left(T - 1\right) = \dim \ker\left(T^* - 1\right)$, where $T^{*}$ is the adjoint for the $\Leb^2$ scalar product. But a stationary measure with density in $\Leb^2$ is the same as an eigenvector of $T^{*}$ for the eigenvalue $1$. So the interest of what follows is only to obtain higher regularity. To simplify the proof below, we will make use of the existence of the density in $\Leb^2$.

In this note we focus on generalising Bourgain's argument which deduces from theorem \ref{thm:bisg} properties of regularity for the stationary measure associated to $\nu$. Bourgain's argument for the action of $\mathrm{SL}_2(\R)$ acting on $\mathbb{P}^1$ relies on the use of Fourier series on $\mathbb{P}^1$ seen as a quotient of the circle. For the general case, we note that a maximal compact subgroup $K$ of $G$ acts transitively on $\mathscr{F}$ and therefore we can similarly use the representation theory of $K$. The argument is then exactly parallel to that of Bourgain. More precisely, below we show the following: 

\begin{prop}Let $t > 0$ be a real number. Then there exists a neighbourhood $U$ of the identity in $G$ such that the following holds:
	
	Let $\mu$ be a measure with finite support $S$ and which generates a dense subgroup of $G$ such that $S \subset U$. Assume that the operator $T \colon \Leb^2(\mathcal{F}) \to \Leb^2(\mathcal{F})$ defined by
	\[\left(Tu\right)(\xi) := \sum_{g \in S}\mu(g)u(g^{-1}\xi)\]
	for all $u \in \Leb^2(\mathscr{F})$,
	satisfies the following condition: there exists a subspace $V$ of finite dimension in $\Leb^2(\mathscr{F})$ such that for every $u \in V^{\perp}$, $\norm{Tu}_2 \leq \frac{1}{2}\norm{u}_2$.
	
	Then the unique $\mu$-stationary measure $\nu$ is absolutely continuous with density in $\Sobolev^t(\mathcal{F})$.\label{lem}
\end{prop}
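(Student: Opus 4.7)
The approach parallels Bourgain's, replacing Fourier analysis on the circle by harmonic analysis on $K$. Using Peter--Weyl, decompose
\[\Leb^2(\mathcal{F}) = \Leb^2(K/M) = \bigoplus_\lambda \mathcal{H}_\lambda,\]
summed over irreducible $K$-types $\pi_\lambda$ containing an $M$-fixed vector. Let $P_\lambda$ denote the orthogonal projection onto $\mathcal{H}_\lambda$ and $c(\lambda)$ the Casimir eigenvalue of $\pi_\lambda$; write $P_{<N} := \sum_{c(\lambda)<N} P_\lambda$ and $P_{\geq N} := I - P_{<N}$. The Sobolev norm on $\mathcal{F}$ is equivalent to $\norm{f}_{\Sobolev^s}^2 \asymp \sum_\lambda (1+c(\lambda))^s \norm{P_\lambda f}_2^2$. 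The density $h$ of $\nu$ exists in $\Leb^2(\mathcal{F})$ by the functional-analytic argument recalled above; it is the $1$-eigenvector of an operator $\tilde T$ dual to $T$ on densities, explicitly $\tilde T v(\eta) = \sum_{g\in S} \mu(g) J_{g^{-1}}(\eta) v(g^{-1}\eta)$ where $J_g$ is the Jacobian of $g$ acting on $\mathcal{F}$. Since $\tilde T$ shares the structural features of $T$---it is an averaging-type operator supported near the identity, and it inherits a restricted spectral gap with a possibly different finite-dimensional exceptional subspace---the argument below applies verbatim to $\tilde T$; we formulate it for $T$ for brevity, applying it ultimately to $\tilde T$.

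Two ingredients drive the proof. \emph{(i) Spectral gap on high frequencies.} Since $V$ is finite-dimensional it is contained in $P_{<N_0}\Leb^2(\mathcal{F})$ for some $N_0$; hence $P_{\geq N_0}\Leb^2(\mathcal{F}) \subset V^\perp$, and $\norm{T^m u}_2 \leq 2^{-m}\norm{u}_2$ for every $u\in P_{\geq N_0}\Leb^2(\mathcal{F})$ and every $m\geq 1$. \emph{(ii) Near-commutation with frequency cutoffs.} For $g=\exp(X)\in U$, the pull-back $A_g u(\xi) := u(g^{-1}\xi)$ admits the Taylor expansion $A_g u = \sum_{j\geq 0}(L_X^j u)/j!$, where $L_X$ is the Lie derivative on $\mathcal{F}$ corresponding to $X\in\mathfrak{g}$. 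Each $L_X$ is a first-order differential operator which shifts $K$-types by at most a fixed constant in the weight lattice, so for $u\in P_{<N}\Leb^2(\mathcal{F})$ only Taylor terms of sufficiently large order $j$ contribute to $P_{\geq 2N}$. Combined with operator-norm bounds on $L_X^j$ and Stirling's formula, this yields
\[\norm{P_{\geq 2N}\,T\,P_{<N}}_{\mathrm{op}} \leq \psi(N),\]
where $\psi(N)$ decays faster than any negative power of $N$, provided $\mathrm{diam}(U)$ is small enough (depending on $t$).

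Putting the ingredients together, one runs Bourgain's iteration: using $h = \tilde T^m h$ and splitting $h = P_{<2N}h + P_{\geq 2N}h$ with $2N\geq N_0$,
\begin{align*}
\norm{P_{\geq 4N} h}_2
&= \norm{P_{\geq 4N}\,\tilde T^m h}_2 \\
&\leq \norm{\tilde T^m P_{\geq 2N} h}_2 + \norm{P_{\geq 4N}\,\tilde T^m P_{<2N} h}_2 \\
&\leq 2^{-m}\norm{h}_2 + C_m \psi(N)\norm{h}_2,
\end{align*}
the first summand controlled by (i) and the second by propagating (ii) through the $m$ iterations of $\tilde T$. Choosing $m = \lceil (t+1)\log_2 N \rceil$ balances the two bounds to $\leq C'_t N^{-(t+1)}\norm{h}_2$, so $\norm{P_{\geq N}h}_2 = O_t(N^{-(t+1)})$ and $h\in \Sobolev^t(\mathcal{F})$.

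The main obstacle is establishing the near-commutation estimate (ii)---specifically the super-polynomial decay of $\psi(N)$---via the representation theory of $K$, which here replaces Bourgain's Fourier-series computation on $\mathbb{T}$ and is the only point where the structure of $K$ enters in an essential way. Propagating the single-step bound through $m$ iterations of $\tilde T$, while technically delicate, amounts to bookkeeping based on the fact that $\tilde T$ spreads the frequency support by only a bounded amount at each step.
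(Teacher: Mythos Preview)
There are two genuine gaps.

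\textbf{(A) The reduction to a low-frequency exceptional space.} Your assertion that ``$V$ finite-dimensional $\Rightarrow V\subset P_{<N_0}\Leb^2(\mathcal{F})$ for some $N_0$'' is simply false: a single vector with infinitely many nonzero $K$-isotypic components already fails it. The paper handles this by \emph{approximating} $V$ by $\bigoplus_{k<N}\mathcal{L}_k$, which only gives $\norm{Tu}_2\leq(\tfrac12+\delta)\norm{u}_2$ on the new orthocomplement; one then replaces $T$ by $T^2$ to restore the factor $\tfrac12$.

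\textbf{(B) The spectral gap does not iterate.} Even granting $P_{\geq N_0}\Leb^2(\mathcal{F})\subset V^\perp$, your claim $\norm{T^m u}_2\leq 2^{-m}\norm{u}_2$ for $u\in P_{\geq N_0}\Leb^2(\mathcal{F})$ does not follow: the hypothesis gives $\norm{Tu}_2\leq\tfrac12\norm{u}_2$, but $Tu$ need not lie in $V^\perp$, so you cannot apply the bound again. This is precisely the difficulty the argument must overcome, and your final display uses the false iterated bound in the first summand. Your ingredient (ii) controls low-to-high leakage $P_{\geq 2N}TP_{<N}$, but what is needed to iterate the gap is control of the \emph{high-to-low} leakage $P_{<N}TP_{\geq N}$, and you never address it.

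The paper's fix is different in spirit from your near-commutation scheme. Rather than bounding frequency leakage of a single application of $T$, it proves the Sobolev estimate $\norm{(T^*)^m}_{\Sobolev^s}\lesssim_s e^{csm\varepsilon}$ (this is where closeness to the identity enters), and uses it to show that for $u\in\mathcal{L}_k$ the low-frequency part $\norm{P_{<N}T^m u}_\infty$ is $\lesssim_{s,N} e^{csm\varepsilon}2^{(r/4-s)k}\norm{u}_2$. One then iterates step by step,
\[
\norm{T^{m+1}u}_2\;\leq\;\norm{P_{<N}T^m u}_\infty\,m(\mathcal{F})^{1/2}+\tfrac12\norm{T^m u}_2,
\]
resplitting at every step and paying the Sobolev-controlled low-frequency cost each time. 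Summing and optimising $m\sim k$ gives $\norm{T^\ell u}_2\lesssim 2^{-Ck/\varepsilon}\norm{u}_2$, and pairing with the $\Leb^2$ density of $\nu$ yields $\norm{P_k g}_2\lesssim 2^{-Ck/\varepsilon}$. The point is that the low-frequency error is allowed to accumulate, but its size $e^{csm\varepsilon}2^{-sk}$ stays small as long as $m\lesssim k/\varepsilon$, which is exactly the range needed.
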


Combining theorem \ref{thm:bisg} and proposition \ref{lem}, we obtain the following result:

\begin{thm}
		Let $t > 0$ be a real number. 
		Let $\Gamma < G$ be a \emph{topologically dense} subgroup. Assume that there exists a basis of $\mathfrak{g}$ with respect to which for every $g\in \Gamma$, the matrix of $\mathrm{Ad}\,g$ has coefficients which are algebraic numbers.
		
		Then there exists a symmetric subset $T \subset \Gamma$ such that the random walk associated to the measure $\frac{1}{\abs{T}}\sum_{g \in T}\delta_g$ has a density in $\Sobolev^t(\mathscr{F})$ (and therefore in $\mathscr{C}^k$ if $t$ is large enough by the Sobolev embedding theorem).
\end{thm}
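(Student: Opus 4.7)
The plan is to derive the final theorem by chaining Proposition \ref{lem} and Theorem \ref{thm:bisg} together in the correct order. Given the target regularity $t > 0$, I would first apply Proposition \ref{lem} to produce a neighbourhood $U$ of the identity in $G$ with the property that any finitely supported measure whose support lies in $U$, which generates a dense subgroup of $G$, and whose Markov operator on $\Leb^2(\mathscr{F})$ has a restricted spectral gap with contraction factor $\frac{1}{2}$, admits a stationary measure with density in $\Sobolev^t(\mathscr{F})$.

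Next, I would feed this $U$ into Theorem \ref{thm:bisg}, which yields a symmetric subset $T \subset U \cap \Gamma$ such that the Markov operator associated to $\mu := \frac{1}{\abs{T}}\sum_{g \in T}\delta_g$ satisfies the required restricted spectral gap. To invoke Proposition \ref{lem} one also needs $\mu$ to generate a topologically dense subgroup of $G$; this is the one verification that is not purely formal, and it is either part of the conclusion of Theorem \ref{thm:bisg} as proved in \cite{bisg} (the subset produced there is typically free and its closure in $G$ can be seen to be all of $G$) or can be arranged by symmetrically enlarging $T$ by finitely many elements of $\Gamma \cap U$ chosen so as to make the generated subgroup dense, an operation which, if done with sufficiently small weight in the averaging, preserves the existence of a cofinite subspace on which the operator is a strict contraction (with factor arbitrarily close to $\frac{1}{2}$, sufficient for the proof of Proposition \ref{lem}). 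At this point Proposition \ref{lem} applies and delivers the density in $\Sobolev^t(\mathscr{F})$.

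Finally, for the parenthetical statement about $\clsC^k$ regularity I would invoke the Sobolev embedding theorem on the compact manifold $\mathscr{F}$: a density in $\Sobolev^t(\mathscr{F})$ lies in $\clsC^k(\mathscr{F})$ as soon as $t > k + \frac{1}{2}\dim \mathscr{F}$, which is admissible since $t$ is arbitrary. The main obstacle is the density verification above; once it is in hand, the proof is essentially a bookkeeping exercise combining the two preceding results.
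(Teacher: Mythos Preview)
Your proposal is correct and follows exactly the route the paper takes: the theorem is stated in the paper as an immediate consequence of combining Theorem~\ref{thm:bisg} with Proposition~\ref{lem}, with no further argument given. Your treatment is in fact more careful than the paper's, since you flag the one non-formal point---that the support produced by Theorem~\ref{thm:bisg} must generate a dense subgroup so that Proposition~\ref{lem} applies and the stationary measure is unique---and sketch two ways to resolve it; the paper simply elides this.
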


Let us note that the higher the regularity, the closer to the identity the measure is required to be supported. This is somewhat similar in spirit to a result of  Erd\H{o}s-Kahane \cite{sixty}. 

It remains an interesting problem to study the regularity of arbitrary finitely-supported measures whose support is close enough to the identity. In this direction, let us mention the results of Hochman-Solomyak \cite{hochman-solomyak} on the dimension of the Furstenberg measure .

\paragraph{Acknowledgement. } I thank Nicolas de Saxcé for having read an earlier version of this note, for interesting discussions on this topic and for letting me know of his slightly different proof of a similar result with Wouter van Limbeek and David Fisher. I also thank Bertrand Deroin for encouragement and advice on this topic.
\section{Preliminaries}
\paragraph{Notations. }
Let $G$ be a connected non-compact simple Lie group with finite centre. Let $\mathfrak{g}$ be its Lie algebra. Let $\theta$ be a Cartan involution and write the associated Cartan decomposition $\mathfrak{g} = \mathfrak{k} \oplus \mathfrak{p}$ with $\mathfrak{k}$ and $\mathfrak{p}$ the eigenspaces of $\theta$ associated to the eigenvalues $1$ and $-1$, respectively.

Let $B \colon (X, Y) \mapsto \mathrm{tr}(\ad X\ad Y)$ be the Killing form of $\mathfrak{g}$. We consider the scalar product $(X, Y) \mapsto \pscal{X}{Y} := -B(X, \theta Y)$ on $\mathfrak{g}$, which is $\mathrm{Ad}$-invariant, and the associated norm denoted by $\norm{\cdot}$.

Let $\mathfrak{a} \subset \mathfrak{p}$ be a maximal abelian subspace and $A = \exp \mathfrak{a}$. Denote by $\Sigma$ the set of restricted roots. We choose a closed Weyl chamber $\mathfrak{a}^{+}$ of $\mathfrak{a}$ and denote by $\Sigma^{+}$ the set of associated positive roots, that is those which are non-negative on $\mathfrak{a}^{+}$. Let $\mathfrak{n} := \sum_{\lambda \in \Sigma^{+}}\mathfrak{g}_\lambda$ where $\mathfrak{g}_\lambda := \{X\in\mathfrak{g}\,:\,\left(\mathrm{ad}\,H\right)X = \lambda(H)X\text{ pour tout }H \in \mathfrak{a}\}$ is the root space associated to $\lambda \in \mathfrak{a}^{*}$ and $N$ the analytic subgroup with Lie algebra $\mathfrak{n}$. Let us also write \[\rho := \frac{1}{2}\sum_{\lambda \in \Sigma^{+}} \lambda\in \mathfrak{a}^{*}\] for the half-sum of positive roots.

Define $A^{+} := \exp \mathfrak{a}^{+}$. Then we have the Cartan decomposition $G = KA^{+}K$: for every $g \in G$, there exists a unique $\kappa(g) \in \mathfrak{a}^{+}$ such that $g \in K\exp\left(\kappa(g)\right)K$. This defines a map  $\kappa \colon G \to \mathfrak{a}$ called the \emph{Cartan projection}. Similarly, the Iwasawa decomposition $G = KAN$ gives the existence, for every $g \in G$, of a unique $H(g) \in \mathfrak{a}$ such that $g \in K\exp\left(H(g)\right)N$.

Let $M$ be the centraliser of $A$ in $K$. Let $P = MAN$ be the associated standard minimal parabolic subgroup, and $\mathcal{F} = G/P$ the associated flag manifold. The group $K$ acts transitively on $\mathcal{F}$ and the stabiliser is $M$. We will therefore identify $\mathcal{F}$ with $K/M$. The scalar product on $\mathfrak{g}$ induces a Riemannian metric on $K/M$ which is $K$-invariant on the left. Let $m$ be the associated volume measure, which is a Haar measure.

Given $g\in G$ and $\xi = kP \in \mathcal{F}$ with $k \in K$, write $\sigma(g, \xi) = H(gk^{-1})$; this defines a cocycle $\sigma \colon G \times \mathcal{F} \to \mathfrak{a}$ called the \emph{Iwasawa} cocycle. We then have the following lemma \cite{quint2006overview}:
\begin{lem}The Radon-Nikodym derivative of $(g^{-1})_{*}m$ with respect to $m$ at $\xi \in \mathcal{F}$ is $e^{-2\rho(\sigma(g,\,\xi))}$.\end{lem}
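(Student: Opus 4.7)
The plan is to reduce the statement to a change-of-variables computation on $K$ via the identification $\mathcal{F}\cong K/M$, where $m$ becomes the image of Haar measure on $K$. The Radon–Nikodym claim is equivalent to showing that for every continuous test function $f$ on $\mathcal{F}$,
\[
\int_K f(g^{-1}\cdot kM)\,dk\;=\;\int_K f(kM)\,e^{-2\rho(\sigma(g,\,kM))}\,dk,
\]
so everything reduces to computing the Jacobian of the diffeomorphism of $K/M$ induced by $g^{-1}$, expressed through the Iwasawa $K$-component.

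The key input is the integration formula for left Haar measure on $G$ in Iwasawa coordinates, $dg=e^{2\rho\log a}\,dk\,da\,dn$, together with left-invariance of $dg$. I would extend $f$ to a right-$P$-invariant function on $G$, pick auxiliary compactly supported functions $\phi\in C_c(A)$ and $\psi\in C_c(N)$ with $\int\phi\,da=\int\psi\,dn=1$, and form $F(h):=f(h)\phi(a(h))\psi(n(h))$, where $a(h),n(h)$ denote the $A$- and $N$-components of $h$ in Iwasawa. Unfolding $\int_G F\,dh$ in $KAN$ coordinates immediately recovers $\int_K f(kM)\,dk$, so this reduction is faithful.

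Then I would apply the substitution $h\mapsto g^{-1}h$, using left-invariance of $dg$. Writing $g^{-1}h=k(g^{-1}h)\exp H(g^{-1}h)\,n(g^{-1}h)$ and re-expanding in Iwasawa coordinates, the $K$-component becomes $k(g^{-1}k)$ (so $f$ gets evaluated at $g^{-1}\cdot kM$), while the shift in the $A$-component introduces, through the weight $e^{2\rho\log a}$, precisely the factor $e^{-2\rho H(g^{-1}k)}$. The $N$-contribution integrates out because $\psi$ has total mass $1$ and right translation on $N$ preserves $dn$. Once the two expansions are compared, the identity above falls out, which translates into the Radon–Nikodym formula modulo the paper's convention for $\sigma$ (possibly after replacing $k$ by $k^{-1}$ to match the given identification).

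The main obstacle is the bookkeeping in the substitution step: one must be careful to track how $a(g^{-1}h)$ and $n(g^{-1}h)$ depend on $(k,a,n)$ through the multiplicative cocycle $H$, and to justify that the weight $e^{2\rho\log a}$ transforms under $a\mapsto a(g^{-1}ka)$ in a way that produces the stated Radon–Nikodym density; this essentially amounts to the fact that $\sigma$ is an additive cocycle and that its exponential twisted by $-2\rho$ is the correct modular character because $-2\rho$ is the determinant of $\mathrm{Ad}$ on $\mathfrak{n}$.
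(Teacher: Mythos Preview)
Your approach is correct and is precisely the standard derivation found in the references the paper defers to: the paper does not give its own proof of this lemma but simply cites Quint's overview \cite{quint2006overview}. The change-of-variables argument you outline (lift to $G$, use the Iwasawa integration formula $dg=e^{2\rho(\log a)}\,dk\,da\,dn$ together with left invariance, then strip off the $A$- and $N$-integrals) is exactly how this identity is proved there and in the standard textbooks.

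One small remark on the bookkeeping: when you unfold $\int_G F\,dh$ in $KAN$ coordinates you will not literally recover $\int_K f(kM)\,dk$ but rather that integral times the constant $\int_A \phi(a)e^{2\rho(\log a)}\,da$; this same constant appears after the substitution $h\mapsto g^{-1}h$ and cancels, so it does not affect the argument. Your caveat about the sign convention in $\sigma(g,\xi)=H(gk^{\pm 1})$ is well taken: the paper's written convention $H(gk^{-1})$ appears to be a typo for the standard $H(gk)$, and with the standard convention your computation matches the stated formula on the nose.
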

Moreover, there is an inequality between the Iwasawa cocyle and the Cartan projection \cite[consequence of corollary 8.20]{benoistrandom}:
\begin{lem}For every $g \in G$ and every $\xi \in \mathcal{F}$, we have $\norm{\sigma(g, \xi)} \leq \norm{\kappa(g)}$.
	\label{lem:cartan-iwasawa}\end{lem}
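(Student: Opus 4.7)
The plan is first to reduce the stated cocycle inequality to the universal bound $\norm{H(h)} \leq \norm{\kappa(h)}$ for an arbitrary $h \in G$, and then to prove this latter via representation theory combined with a convexity argument. For the reduction, write $\xi = kP$ with $k \in K$; the definition gives $\sigma(g,\xi) = H(gk^{-1})$, while the bi-$K$-invariance of the Cartan decomposition yields $\kappa(gk^{-1}) = \kappa(g)$. Setting $h := gk^{-1}$, the claim becomes $\norm{H(h)} \leq \norm{\kappa(h)}$.

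The core one-sided comparison comes from the finite-dimensional representation theory of $G$. Fix an irreducible complex representation $(\pi, V)$ equipped with a $K$-invariant Hermitian inner product; let $\chi \in \mathfrak{a}^{*}$ be its highest restricted weight and $v_\chi$ a unit highest weight vector, so that $\pi(n)v_\chi = v_\chi$ for every $n \in N$ (positive root vectors annihilate $v_\chi$) and $\pi(\exp H)v_\chi = e^{\chi(H)}v_\chi$ for $H \in \mathfrak{a}$. Applying $\pi$ to the Iwasawa decomposition $h = k\exp(H(h))n$ and using unitarity of $\pi(k)$ gives $\norm{\pi(h)v_\chi} = e^{\chi(H(h))}$. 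Applying $\pi$ instead to the Cartan decomposition $h = k_1 \exp(\kappa(h))k_2$ and using that $\pi(K)$ consists of unitaries gives $\norm{\pi(h)}_{\mathrm{op}} = \norm{\pi(\exp\kappa(h))}_{\mathrm{op}} = e^{\chi(\kappa(h))}$, the last equality holding because any weight $\mu$ of $\pi$ satisfies $\chi - \mu \in \sum_{\alpha \in \Sigma^{+}}\N\alpha$, and hence $\mu(\kappa(h)) \leq \chi(\kappa(h))$ when $\kappa(h) \in \mathfrak{a}^{+}$. Combining with the trivial bound $\norm{\pi(h)v_\chi} \leq \norm{\pi(h)}_{\mathrm{op}}$ gives $\chi(H(h)) \leq \chi(\kappa(h))$. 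Varying $\pi$ and taking tensor products, the set of highest restricted weights generates a $\N$-submodule of $\mathfrak{a}^{*}$ of full rank whose $\Q_{\geq 0}$-multiples are dense in the closed dominant cone; by linearity in $\chi$, the inequality extends to every $\chi$ in that cone.

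To pass from this family of scalar bounds to the Euclidean comparison $\norm{H(h)} \leq \norm{\kappa(h)}$, I would invoke Kostant's convexity theorem, which asserts that $H(h)$ actually lies in the convex hull of the Weyl orbit $W \cdot \kappa(h)$ in $\mathfrak{a}$. Since the norm coming from the Killing form is Weyl-invariant, every point of this orbit has norm equal to $\norm{\kappa(h)}$, and the closed ball of that radius is convex, so $\norm{H(h)} \leq \norm{\kappa(h)}$, as required. The main obstacle is precisely this convexity step: the one-sided inequality obtained from dominant weights alone cuts out an \emph{unbounded} polyhedral region in $\mathfrak{a}$, so the hypothesis $\chi(H(h)) \leq \chi(\kappa(h))$ on the dominant cone is not, by itself, enough to bound the Euclidean norm of $H(h)$. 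Kostant's theorem supplies the missing geometric input, and in our setting one can of course sidestep reproving it by appealing directly to Corollary 8.20 of \cite{benoistrandom}, which is what the author does. A more elementary alternative would be a Horn-type majorization argument in a faithful matrix representation, comparing the diagonal entries of the $R$-factor of a Gram--Schmidt decomposition with the singular values.
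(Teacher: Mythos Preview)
Your argument is correct. The paper itself does not prove this lemma but simply cites it as a consequence of Corollary~8.20 in \cite{benoistrandom}, so there is no in-paper proof to compare against; you already note this. Your representation-theoretic step giving $\chi(H(h)) \leq \chi(\kappa(h))$ for every dominant $\chi$ is exactly the mechanism behind the cited result in Benoist--Quint, where it is paired with the matching lower bound coming from the contragredient (equivalently, from applying the same estimate to $h^{-1}$) to obtain two-sided control $-\omega_{i^{*}}(\kappa(h)) \leq \omega_i(H(h)) \leq \omega_i(\kappa(h))$ for each fundamental weight; that already yields $\norm{H(h)} \lesssim \norm{\kappa(h)}$ with a constant depending only on the root system, which is all the paper actually needs downstream (any constant is absorbed into the $c$ of Lemma~\ref{lem:sob-repr}). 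Your finishing move via Kostant's Iwasawa convexity theorem is a legitimate alternative and delivers the sharp constant $1$ as stated: writing $h = k_1\exp(\kappa(h))k_2$ and using left-$K$-invariance of $H$ gives $H(h) = H(\exp(\kappa(h))\,k_2) \in \mathrm{conv}(W\!\cdot\!\kappa(h))$, and since the Killing norm on $\mathfrak{a}$ is $W$-invariant the hull lies in the ball of radius $\norm{\kappa(h)}$. In effect you have traded one black box for another and gained the sharp constant; you also correctly flag that the one-sided weight inequality alone would not suffice.
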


In the following, given two quantities $A$ and $B$, we write $A \lesssim B$ if there exists a constant $C > 0$ depending only on the group $G$ and the choices made above (the group $K$, the Cartan involution, etc.) such that $A \leq CB$, and $A \asymp B$ if $A \lesssim B$ and $B \lesssim A$. If the implied constants depend additionally on other parameters, we will write them in indices, for instance $A \lesssim_s B$ if the implied constant depends on $s$.

\paragraph{Sobolev spaces on a representation. }In the following, we will need some facts on Sobolev spaces on Lie group representations, as in \cite{edwards2017rate} for instance.

 Consider the representation $\pi \colon G  \to \mathrm{U}(\Leb^2(\mathcal{F}))$ defined by
 \[\left(\pi(g)u\right)(\xi) := u(g^{-1}\xi)e^{-\rho(\sigma(g, \xi))}\] 
 for every $u \in \mathscr{C}^\infty(\mathcal{F})$ and $\xi \in \mathcal{F}$. This defines an irreducible unitary representation \cite[chapter VII]{knapp2016representation}. Here we use the standard Hilbert space structure on $\Leb^2(\mathcal{F})$ and denote hy $\pscal{\cdot}{\cdot}$ the inner product and $\norm{\cdot}$ the norm. 
 
 By differentiation, we define a representation of the Lie algebra: for every $X \in \mathfrak{g}$ and $v \in \mathscr{C}^\infty(\mathcal{F})$
\[\pi(X)v := \frac{\mathrm{d}}{\mathrm{d}t}\Bigr|_{\substack{t = 0}}\pi(e^{tX})v\] which we can extend to the universal envelopping algebra $\mathcal{U}(\mathfrak{g}_\C)$  $\mathfrak{g}_\C$, and that we also denote by $\pi$ (see Knapp \cite[chapter III]{knapp2016representation}).

We now define Sobolev norms. We will need several equivalent definitions. For definitess we fix one: let ${X_i}$ be an orthonormal basis of $\mathfrak{k}$ for the inner product on $\mathfrak{g}$, and similarly let ${Y_i}$ be an orthonormal basis of $\mathfrak{p}$. Let
\[\Delta = -\sum_i X_i^2 -\sum_i Y_i^2 \in \mathcal{U}(\mathfrak{g}_\C),\]
and consider, for any $s \in \N$, the the scalar product $\pscal{\cdot}{\cdot}_{\Sobolev^s}$ on $\mathscr{C}^\infty(\mathcal{F})$ defined by
\[\pscal{u}{v}_{\Sobolev^s} := \pscal{\pi(1 + \Delta)^su}{v}.\]
We will write $\norm{\cdot}_{\Sobolev^s}$ for the associated norm. Then we define the space $\Sobolev^s(\mathscr{F})$ as the closure in $\Leb^2(\mathscr{F})$ of $\mathscr{C}^\infty(\mathscr{F})$ for the norm $\norm{\cdot}_{\Sobolev^s}$.

\paragraph{Harmonic analysis on $\Leb^2(\mathcal{F})$. }As we have seen, we can identify $\mathcal{F}$ with $K/M$. Actually, denoting by $K_0$ the neutral component of $K$, the compact connected Lie group $K_0$ acts transitively on $\mathcal{F}$, too, with stabiliser $K_0 \cap M$ (see Knapp \cite[lemma 7.33]{knapplie}). Therefore we identify $\Leb^2(\mathcal{F})$ as a $K_0$-module to the subspace of right-$\left(K_0\cap M\right)$-invariant elements of $\Leb^2(K_0)$.

Let $\hat{K_0}$ be the unitary dual of $K_0$.
The differential operator $\Delta$ can be written as $\Delta = -\mathcal{C} - 2\mathcal{C}_K$, where $\mathcal{C}$ and $\mathcal{C}_K$ are the Casimir operators of $G$ and $K$ respectively (see Knapp \cite[proof of theorem 8.7]{knapp2016representation}). Because $\pi$ is an irreducible unitary representation, $\pi(\mathcal{C})$ acts as a constant on $\mathscr{C}^\infty(\mathcal{F})$. As the Casimir operator of $K$ acts as an elliptic operator on $\mathscr{F}$, this means that this Sobolev norm coincides with the usual definition of a Sobolev norm on the Riemannian manifold $\mathscr{F}$.

Moreover, for every $\tau \in \hat{K_0}$, the operator $1 + \Delta$ acts on the subspace $\Leb^2(\mathscr{F})_\tau$ of $\tau$-isotypic vectors as a constant $c(\tau)$. Because $ \pi\left(\Delta\right)$ is self-adjoint and $\pscal{\pi\left(\Delta\right) u}{u} \geq 0$ for every $u \in \mathscr{C}^\infty(\mathscr{F})$, this constant is a real number and $c(\tau) \geq 1$. 

We now define a Littlewood-Paley decomposition. For every non-negative integer $k$, let $\mathcal{L}_k$ be the orthogonal sum of the $\Leb^2(\mathscr{F})_\tau$, for all $\tau \in \hat{K_0}$ such that $2^k \leq c(\tau) < 2^{k + 1}$. Then $\Leb^2(\mathscr{F})$ is the Hilbert sum of the $\mathcal{L}_k$ ($k \in \N$). For every $k\in\N$, let $P_k$ be the orthogonal projection on $\mathcal{L}_k$. Write also $P_{< k} = \sum_{0 \leq j < k} P_j$ and $P_{\geq k} = 1 - P_{< k}$. Then we can give a second, equivalent, definition of the Sobolev norm. Let $s \in \N$. For $u\in \Leb^2(\mathscr{F})$, then
\[\norm{u}_{\Sobolev^s}^2 \asymp_s \sum_{k} 2^{sk}\norm{P_ku}_2^2 .\]

Finally, in the course of the proof of lemma \ref{lem:sob-repr}, we will need a third definition of Sobolev norms. Let $\mathcal{B}$ be the basis given by the $\{X_i\}$ and the $\{Y_i\}$ as above in the definition of $\Delta$. Given a non-negative integer $s$ and $u \in \mathscr{C}^\infty(\mathcal{F})$, we have (see Nelson \cite{nelson1959analytic})
\begin{equation}\norm{u}_{\Sobolev^s}^2 \asymp_{s} \sum_{k = 0}^s \sum_{X_1, \ldots, X_k \in \mathcal{B}} \norm{\mathrm{d}\pi(X_{1}\ldots X_{k})u}_2^2
\label{eq-sobolev-3}\end{equation}
where for $k = 0$ the sum is reduced to $\norm{u}_2^2$. 
\begin{lem}
	There exists a constant $c > 0$ such that for every $s \in \N$, and for every $g \in G$ and $u\in \mathscr{C}^\infty(\mathscr{F})$, $\norm{\pi(g)u}_{\Sobolev^s} \lesssim_s e^{cs\norm{\kappa(g)}}\norm{u}_{\Sobolev^s}$.
	\label{lem:sob-repr}
\end{lem}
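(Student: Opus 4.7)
The plan is to use the third, vector-field form of the Sobolev norm given in equation \eqref{eq-sobolev-3}, together with the intertwining identity $\mathrm{d}\pi(X)\pi(g) = \pi(g)\mathrm{d}\pi(\mathrm{Ad}(g^{-1})X)$ (extended multiplicatively to $\mathcal{U}(\mathfrak{g}_\C)$), the unitarity of $\pi$, and a bound on the operator norm of $\mathrm{Ad}(g^{-1})$ in terms of $\norm{\kappa(g)}$.

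First I would bound the operator norm of $\mathrm{Ad}(g^{-1})$ acting on $\mathfrak{g}$ equipped with $\pscal{\cdot}{\cdot}$. Writing $g = k_1 \exp(\kappa(g)) k_2$ via the $KAK$ decomposition and noting that $\mathrm{Ad}(k)$ is unitary for $k \in K$ (since $\pscal{\cdot}{\cdot}$ is $\mathrm{Ad}$-invariant on $\mathfrak{k}$, and the inner product is built from the Killing form and Cartan involution, both of which are $K$-invariant), we have $\norm{\mathrm{Ad}(g^{-1})}_{\mathrm{op}} = \norm{\mathrm{Ad}(\exp \kappa(g))}_{\mathrm{op}}$. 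The latter acts diagonally on the restricted root space decomposition of $\mathfrak{g}$ with eigenvalues $e^{\lambda(\kappa(g))}$ for $\lambda \in \Sigma \cup \{0\}$, so its norm is at most $e^{c\norm{\kappa(g)}}$ with $c := \max_{\lambda \in \Sigma}\norm{\lambda}$. Consequently, for every $X \in \mathcal{B}$, if we write $\mathrm{Ad}(g^{-1})X = \sum_{Y \in \mathcal{B}} c_{X,Y}(g)\, Y$, then $\sum_Y |c_{X,Y}(g)|^2 \leq e^{2c\norm{\kappa(g)}}$.

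Next, for any word $X_1\ldots X_k$ with $X_j \in \mathcal{B}$ and $k \leq s$, the intertwining identity yields
\[\mathrm{d}\pi(X_1\ldots X_k)\pi(g)u = \pi(g)\,\mathrm{d}\pi\bigl(\mathrm{Ad}(g^{-1})X_1 \cdots \mathrm{Ad}(g^{-1})X_k\bigr)u = \pi(g) \sum_{Y_1,\ldots,Y_k \in \mathcal{B}}\Bigl(\prod_{j=1}^k c_{X_j,Y_j}(g)\Bigr)\mathrm{d}\pi(Y_1\ldots Y_k)u.\]
Since $\pi(g)$ is unitary on $\Leb^2(\mathcal{F})$, taking $\Leb^2$ norms, applying the triangle and Cauchy-Schwarz inequalities, and using that $\prod_j \sum_{Y_j}|c_{X_j,Y_j}(g)|^2 \leq e^{2ck\norm{\kappa(g)}}$ gives
\[\norm{\mathrm{d}\pi(X_1\ldots X_k)\pi(g)u}_2^2 \lesssim_s e^{2cs\norm{\kappa(g)}}\sum_{Y_1,\ldots,Y_k \in \mathcal{B}}\norm{\mathrm{d}\pi(Y_1\ldots Y_k)u}_2^2.\]
Summing over $k \leq s$ and $X_1, \ldots, X_k \in \mathcal{B}$ (a finite set, since $\dim \mathfrak{g}$ and $s$ are fixed) and invoking \eqref{eq-sobolev-3} twice, once on the left for $\norm{\pi(g)u}_{\Sobolev^s}$ and once on the right for $\norm{u}_{\Sobolev^s}$, yields the desired estimate $\norm{\pi(g)u}_{\Sobolev^s} \lesssim_s e^{cs\norm{\kappa(g)}}\norm{u}_{\Sobolev^s}$ (with the same $c$, up to adjusting the implicit constant).

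There is no real obstacle here: the argument is a mechanical expansion combined with the $KAK$-based bound on $\mathrm{Ad}$. The only mildly delicate point is choosing the vector-field definition of the Sobolev norm rather than the spectral one based on the Casimir: directly conjugating $\pi(1+\Delta)^s$ by $\pi(g)$ is awkward because $\Delta$ is not $\mathrm{Ad}(g)$-invariant (only $\mathcal{C}$ is), whereas the presentation \eqref{eq-sobolev-3} makes the $\mathrm{Ad}(g)$-covariance immediately usable.
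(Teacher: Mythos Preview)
Your proof is correct and follows essentially the same approach as the paper: both use the vector-field description \eqref{eq-sobolev-3} of the Sobolev norm, the intertwining relation $\mathrm{d}\pi(X)\pi(g)=\pi(g)\,\mathrm{d}\pi(\mathrm{Ad}(g^{-1})X)$, and the $KAK$ decomposition to bound the coefficients of $\mathrm{Ad}(g^{-1})X$ in the basis $\mathcal{B}$. The only cosmetic differences are that the paper treats $s=0,1$ explicitly and then inducts, and bounds $\norm{\pi(g)}_{\Leb^2\to\Leb^2}$ via lemma~\ref{lem:cartan-iwasawa} rather than invoking the unitarity of $\pi$ directly as you do; your version is in fact slightly cleaner on that point.
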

\begin{proof}
	For $s = 0$, by lemma \ref{lem:cartan-iwasawa}, we have
	$\norm{\pi(g)u}_2 \leq e^{\norm{\rho}\norm{\kappa(g)}}\norm{u}_2$. For $s = 1$ and $X \in \mathfrak{g}$, we have
	\begin{align*}\norm{\mathrm{d}\pi(X)\pi(g)u}_2 &= \norm{\pi(g)\mathrm{d}\pi((\mathrm{Ad}\,g^{-1})X)u}_2 \\
	&\leq \norm{\pi(g)}_2\norm{\mathrm{d}\pi((\mathrm{Ad}\,g^{-1})X)u}_2 \\ 
	&\lesssim_s e^{c\norm{\kappa(g)}}\norm{u}_{\Sobolev^1}\end{align*} for some constant $c > 0$, which implies the result by equation \ref{eq-sobolev-3}. Here we used the Cartan decomposition to bound the coefficients of $\left(\mathrm{Ad}\,g^{-1}\right)X$ in the basis $\mathcal{B}$.
	The general result follows similarly by induction.
\end{proof}

\section{Bourgain's argument}
\paragraph{Decay of Fourier coefficients. }
We now state a slightly more precise form of proposition \ref{lem} and prove it:
\begin{prop}There exists a constant $C > 0$ such that the following holds:
	
	Let $\eps > 0$ be small enough. Let $\mu$ be a measure whose support has finite support $S$ and generates a dense subgroup of $G$ such that for every $g \in S$, $\norm{\kappa(g)} \leq \eps$. Assume that the operator $T \colon \Leb^2(\mathcal{F}) \to \Leb^2(\mathcal{F})$ defined by
	\[\left(Tu\right)(\xi) := \sum_{g \in S}\mu(g)u(g^{-1}\xi)\]
	for all $u \in \Leb^2(\mathscr{F})$,
	satisfies the following condition: there exists a subspace $V$ of finite dimension in $\Leb^2(\mathscr{F})$ such that for every $u \in V^{\perp}$, $\norm{Tu}_2 \leq \frac{1}{2}\norm{u}_2$.
	
	Then the unique $\mu$-stationary measure $\nu$ is absolutely continuous with density in $\Sobolev^t(\mathcal{F})$ for every $t < \frac{C}{\eps}$.\label{lem}
\end{prop}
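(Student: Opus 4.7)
\emph{Plan of proof.} The strategy is to adapt Bourgain's Fourier-decay argument to the present setting, replacing Fourier series on the circle by the $K_0$-isotypic Littlewood--Paley decomposition of $\Leb^2(\mathscr{F})$ introduced above. By the functional-analytic remark of the introduction, the restricted spectral gap of $T$ already forces $\nu$ to have a density $f\in\Leb^2(\mathscr{F})$ satisfying $T^*f=f$, so I may focus on proving $f\in\Sobolev^t(\mathscr{F})$. Via the equivalence $\norm{f}_{\Sobolev^t}^2\asymp_t\sum_k 2^{tk}\norm{P_kf}_2^2$, this reduces to an appropriate bound on $\norm{P_kf}_2$ for large $k$. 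Two a priori bounds on $T$ fuel the iteration: using Lemma~\ref{lem:cartan-iwasawa} and the Radon--Nikodym formula, $\norm{T}_{\Leb^2\to\Leb^2}\leq 1+C\eps$; and writing $Tu$ as a weighted average of $\pi(g)u$ multiplied by the smooth factor $e^{\rho\sigma(g,\cdot)}$ and applying Lemma~\ref{lem:sob-repr}, $\norm{T}_{\Sobolev^s\to\Sobolev^s}\lesssim_s e^{Cs\eps}$ for every integer $s\geq 0$.

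Next I would run the Bourgain iteration. Fix $N$ large enough that every $K_0$-isotype appearing in the exceptional finite-dimensional subspace $V$ has Casimir eigenvalue strictly less than $2^N$, so that $P_{\geq N}u\in V^\perp$ for every $u\in\Leb^2(\mathscr{F})$. Splitting $T^{m+1}u=TP_{<N}T^mu+TP_{\geq N}T^mu$ and using the spectral gap hypothesis on $V^\perp$ together with the $\Leb^2$ bound on $T$ gives
\[
\norm{T^{m+1}u}_2\;\leq\;\left(\tfrac{3}{2}+C\eps\right)\norm{P_{<N}T^mu}_2+\tfrac{1}{2}\norm{T^mu}_2.
\]
Restricting attention to $u\in\mathcal{L}_k$ of unit $\Leb^2$-norm, the low-frequency term is controlled by passing to the adjoint: for every $\tau\in\hat{K_0}$ with $c(\tau)<2^N$ and every $v\in\Leb^2(\mathscr{F})_\tau$,
\[
\abs{\pscal{T^mu}{v}}=\abs{\pscal{u}{(T^*)^mv}}\;\lesssim_s\;2^{-sk/2}\norm{u}_2\norm{(T^*)^mv}_{\Sobolev^s}\;\lesssim_{s,N}\;2^{-sk/2}e^{Cs\eps m}\norm{v}_2,
\]
using that $u\in\mathcal{L}_k$ gives $\norm{u}_{\Sobolev^{-s}}\asymp 2^{-sk/2}\norm{u}_2$, that Lemma~\ref{lem:sob-repr} iterated $m$ times bounds $\norm{(T^*)^mv}_{\Sobolev^s}$ by $e^{Cs\eps m}\norm{v}_{\Sobolev^s}$, and that on $\Leb^2(\mathscr{F})_\tau$ the Sobolev norm equals $c(\tau)^{s/2}\norm{v}_2\leq 2^{Ns/2}\norm{v}_2$. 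Summing the squared bounds over the finitely many such $\tau$ yields $\norm{P_{<N}T^mu}_2\lesssim_{s,N}2^{-sk/2}e^{Cs\eps m}$.

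Plugging this into the recursion and iterating gives, for $u\in\mathcal{L}_k$ of unit norm,
\[
\norm{T^mu}_2\;\lesssim_{s,N}\;2^{-m}+m\,2^{-sk/2}e^{C's\eps m}.
\]
For any $v\in\mathcal{L}_k$ of unit norm, $\pscal{f}{v}=\pscal{(T^*)^mf}{v}=\pscal{f}{T^mv}$, hence $\norm{P_kf}_2\leq\norm{f}_2\sup_v\norm{T^mv}_2$. Optimising over $m\asymp k$ to balance the two terms yields $\norm{P_kf}_2\lesssim_{s,N}2^{-\gamma(s,\eps)k}$ with $\gamma(s,\eps)\approx s/(2(1+C''s\eps))$, and consequently $f\in\Sobolev^t(\mathscr{F})$ as soon as $t<2\gamma(s,\eps)$. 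Letting $s\to\infty$, the right-hand side tends to $1/(C''\eps)$, which gives the conclusion $t<C/\eps$.

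The main obstacle is exactly this final balancing: the spectral-gap decay $(1/2)^m$ must beat the exponential expansion $e^{Cs\eps m}$ of the $\Sobolev^s$ norm under iteration of $T^*$, which forces $s\eps$ to be smaller than a universal constant and dictates that the conclusion holds only up to regularity $t\lesssim 1/\eps$. The remaining issues (the polynomial-in-$N$ prefactors arising from summing over the finitely many $\tau$ with $c(\tau)<2^N$, the equivalence of the various Sobolev norms used, and the transposition between $T$ and $T^*$) amount to routine bookkeeping.
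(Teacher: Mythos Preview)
Your outline follows the paper's argument closely: Littlewood--Paley decomposition via $K_0$-isotypes, iteration of the restricted spectral gap on high frequencies, control of the low-frequency spill-over by the $\Sobolev^s$ bound on $(T^{*})^m$ coming from Lemma~\ref{lem:sob-repr}, and a final optimisation in $m$ and then in $s$. Two points are worth flagging.

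\textbf{A gap in the reduction.} Your choice of $N$ is not justified as stated. You write ``fix $N$ large enough that every $K_0$-isotype appearing in $V$ has Casimir eigenvalue $<2^N$, so that $P_{\geq N}u\in V^\perp$''. But the conclusion $P_{\geq N}u\in V^\perp$ for all $u$ is equivalent to $V\subset\bigoplus_{j<N}\mathcal{L}_j$, and a finite-dimensional subspace of $\Leb^2(\mathscr{F})$ need not lie in any finite sum of isotypic components: already a single vector typically has infinitely many nonzero isotypic pieces. The paper handles this by an approximation: replace $V$ by $\bigoplus_{j<N}\mathcal{L}_j$ for $N$ large, which only slightly worsens the gap on the complement (say to some $\lambda<1$), and then pass from $T$ to $T^2$ (the Markov operator of $\mu*\mu$, whose support still has $\norm{\kappa(\cdot)}\leq 2\eps$) to bring the gap back below $1/2$. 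Once you insert this step, the rest of your argument goes through.

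\textbf{Two simplifications relative to the paper.} Your use of the crude bound $\norm{T}_{\Leb^2\to\Leb^2}\leq 1+C\eps$ to control $\norm{TP_{<N}T^mu}_2$ replaces the paper's detour through the sup-norm and a Bernstein-type inequality on $K_0$. Likewise, your direct duality estimate $\abs{\pscal{u}{w}}\lesssim_s 2^{-sk/2}\norm{u}_2\norm{w}_{\Sobolev^s}$ for $u\in\mathcal{L}_k$ avoids the paper's decomposition of $u$ into individual isotypes and the accompanying Cauchy--Schwarz over the $\asymp 2^{rk/2}$ types in $\mathcal{L}_k$, which is where the paper's extra factor $2^{rk/4}$ comes from. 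Both are genuine streamlinings; neither affects the final exponent, since $s$ is sent to infinity at the end.
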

	\begin{proof}[Proof of proposition \ref{lem}]As we have noted in the introduction, the arguments of Benoist-Quint \cite{benoist2018regularity} show that there exists a density in $\Leb^2(\mathscr{F})$ for the stationary measure $\nu$; let us denote it by $g$.
		
		We now reduce to the case where there exists $N$ such that the subspace $V$ is the sum of the $\mathcal{L}_k$ for $k < N$. Indeed, we can approximate a finite-dimensional subspace in such a way; this might slightly increase the norm of the operator $T$, but up to replacing it by $T^2$ this is not a problem, as long as $\eps$ is small enough.
		\begin{fait} There exists a constant $c > 0$ such that for every non-negative integer $s$ which is large enough and every $m \in \N$, we have
			 $\norm{\left(T^{*}\right)^m}_{\Sobolev^s} \lesssim_s e^{csm\eps}$.
		\end{fait}
		Here we have denote by $T^{*}$ the $\Leb^2$-adjoint of $T$.\begin{proof}
	We can reduce to the analogous statement where $\mu$ is a Dirac mass at $g \in G$, in which case for every $u \in \Leb^2(\mathscr{F})$, $Tu = \pi(g)u \cdot \pi(g)1$. The lemma is then a consequence of the bound for the Sobolev norm of a product and lemma \ref{lem:sob-repr}.
		\end{proof}
We now give a bound for the low frequencies. Let $\tau, \sigma \in \hat{K_0}$ and $u \in \Leb(\mathscr{F})_\tau$ et $v \in \Leb(\mathscr{F})_\sigma$. Then for every even non-negative integer $s$, we have:
\begin{align*}
\abs{\pscal{T^mu}{v}} &=\abs{\pscal{u}{\left(T^{*}\right)^mv}} \\
&= \frac{1}{c(\tau)^{s/2}}\abs{\pscal{\pi\left(1 + \Delta\right)^{s/2}u}{\left(T^{*}\right)^mv}} \\
&\lesssim_s \frac{1}{c(\tau)^{s/2}}\norm{u}_2\norm{\left(T^{*}\right)^mv}_{\Sobolev^s} \\
&\leq \frac{1}{c(\tau)^{s/2}}\norm{\left(T^{*}\right)^m}_{\Sobolev^s}\norm{u}_2\norm{v}_{\Sobolev^s} \\
&\asymp_s \frac{c(\sigma)^{s/2}}{c(\tau)^{s/2}}\norm{\left(T^{*}\right)^m}_{\Sobolev^s}\norm{u}_2\norm{v}_2.
\end{align*}
Let now $k \in \N$ and $u \in \mathcal{L}_k$. Write $u = \sum_{\tau} u_\tau$ where $\tau$ ranges over those $\tau \in \hat{K_0}$ with $2^k \leq c(\tau) < 2^{k + 1}$ and $u_\tau$ is the $\tau$-isotypic component of $u$. Let $N_k$ be the number of $\tau \in \hat{K_0}$ such that $2^{k} \leq c(\tau) < 2^{k + 1}$. A classical argument relating $c(\tau)$ to a quadratic expression in the highest weight of $\tau$ (see Warner \cite[proof of lemma 4.4.2.3]{warner2012harmonic}) shows that $N_k \asymp 2^{rk/2}$, where $r$ is that rank of $K_0$. Therefore
\begin{align*}
\abs{\pscal{T^mu}{v}} &\leq N_k^{1/2}\left(\sum_{2^k \leq c(\tau) < 2^{k + 1}} \pscal{T^mu_\tau}{v}^2\right)^{1/2} \\
&\leq \norm{\left(T^{*}\right)^m}_{H^s}N_k^{1/2}\left(\sum_{2^k \leq c(\tau) < 2^{k + 1}} \frac{c(\mu)^s}{c(\lambda)^s}\norm{u_\tau}^2\norm{v}^2\right)^{1/2} \\
&\leq 2^{rk/4}\norm{\left(T^{*}\right)^m}_{\Sobolev^s}c(\mu)^{s/2}2^{-sk}\norm{u}_2\norm{v}_2
\end{align*}

Let $j \in \N$. Taking an orthonormal basis of $\mathcal{L}_j$ gives:
\[\norm{P_jT^mu}_2 \lesssim_{s, j} \norm{\left(T^{*}\right)^m}_{\Sobolev^s}2^{(r/4 - s)k} \norm{u}_2.\]

We will need the following Bernstein-type inequality \cite{howe1988almost} for the sup-norm $\norm{\cdot}_\infty$, which is a simple consequence of the representation theory of $K_0$ and the Cauchy-Schwarz inequality:
\begin{lem}Let $\tau \in \hat{K_0}$ and $u \in \Leb^2(K_0)_\tau$. Then $\norm{u}_\infty \leq \dim(\tau)\norm{u}_2$, where $\dim(\tau)$ denotes the dimension of $\tau$.
\end{lem}

Therefore, for every $j \in \N$, we have \begin{equation}\norm{P_{\leq j}T^mu}_\infty \lesssim_{s, j}\norm{\left(T^{*}\right)^m}_{H^s}2^{(r/4 - s)k}\norm{u}_2.
\label{eq:low-freq}
\end{equation}
	
We now give Bourgain's bound, which relies on iterating the restricted spectral gap of $T$ for the high frequencies, and controlling the low frequencies part which appear at each inductive step with equation \ref{eq:low-freq}. Let $k \in \N$ and $u \in \mathcal{L}_k$. Then
\begin{align*}\norm{T^{m + 1} u}_2 
&\leq \norm{TP_{< N}T^mu}_2 + \norm{TP_{\geq N} T^mu}_2 \\
&\leq m(\mathscr{F})^{1/2}\norm{TP_{< N}T^mu}_{\infty} + \frac{1}{2}\norm{P_{\geq N}T^mu}_2 \\
&\leq m(\mathscr{F})^{1/2}\norm{P_{< N}T^mu}_\infty + \frac{1}{2}\norm{T^mu}_2\\
&\leq C_{s, N}\norm{\left(T^{*}\right)^m}_{H^s}2^{(r/4 - s)k}\norm{u}_2 + \frac{1}{2}\norm{T^mu}_2
\end{align*}
for some constant $C_{s, N} > 0$.
	
Iterating, we obtain:
\[\norm{T^\ell u}_2 \lesssim_{s, N} \left(\frac{\norm{\left(T^{*}\right)^{\ell - 1}}_{\Sobolev^s}}{2^{\ell - 1}} + \ldots + \frac{\norm{T^{*}}_{\Sobolev^s}}{2} \right)2^{(r/4 - s)k}\norm{u}_2 + 2^{-\ell}\norm{u}_2.\]
Because for $s$ large enough $\norm{T^m}_{\Sobolev^s} \leq 1 \leq C_se^{csm\eps}$ by the claim, we thus have:
\[\norm{T^\ell u}_2 \lesssim_{s, N} \left(e^{cs\ell\eps}2^{(r/4 - s)k} + 2^{-\ell}\right)\norm{u}_2.\]

Let $\ell$ be the integer part of
\[\frac{s - r/4}{\log 2 + cs\eps}k.\]
Then \[\norm{T^\ell u}_2 \lesssim_{s, N} \exp\left(-\left(s - r/4\right)\frac{\log 2}{\log 2 + cs\eps}k\right)\norm{u}_2.\]

For $s$ large enough $\left(s - r/4\right)\frac{\log 2}{\log 2 + cs\eps} > \frac{\log 2}{2c\eps}$, which implies that for $k$ large enough
\[\norm{T^\ell u}_2 \lesssim_{s, N} 2^{-\frac{k}{2c\eps}}\norm{u}_2,\] and therefore
\[\abs{\int u \mathrm{d}\nu} = \abs{\int T^\ell u\mathrm{d}\nu} = \pscal{T^\ell u}{g} \lesssim_{s, N} 2^{-\frac{k}{2c\eps}}\norm{u}_2.\]
where we recall that $g$ is the $\Leb^2$ density of $\nu$.
Therefore, for every $k \in \N$ lare enough,
\[\norm{P_kg}_2^2 = \pscal{P_kg}{g} \lesssim_{s, N}2^{-\frac{k}{2c\eps}}\norm{P_kg}_2.\]
This implies that $\norm{P_kg}_2 \lesssim_{s, N} 2^{-\frac{k}{2c\eps}}$ for $k$ large enough and therefore that $g \in \Sobolev^{t}(\mathscr{F})$ for every $t < \frac{1}{2c\eps}$.
\end{proof}

\bibliographystyle{plain}
\bibliography{bourgain}
\end{document}